\newcommand{\N}{\mathbb{N}}
\renewcommand{\phi}{\varphi}
\newtheorem{theorem}{Theorem}[section]
\newtheorem{lemma}[theorem]{Lemma}
\newtheorem{question}[theorem]{Question}
\newtheorem{corollary}[theorem]{Corollary}
\theoremstyle{definition}
\newtheorem{remark}[theorem]{Remark}
\title{All iterated function systems are Lipschitz up to an equivalent metric}
\author{ Micha\l{} Pop\l{}awski}
\address{M. Pop\l{}awski
	(ORCID 0000-0002-2725-9675): 
	Mathematics Department, 
	Jan Kochanowski University in Kielce,
	Uniwersytecka 7, 25-406 Kielce, Poland}
\email{michal.poplawski.m@gmail.com}
\subjclass[2010]{Primary: 28A80, 47H09; Secondary: 54C05}
\keywords{iterated function system, L-expansive, remetrization theorem, lipschitz function, equicontinuity, joint spectral radius}
\date{\today}
\begin{document}
	
	\begin{abstract} A finite family $\mathcal{F}=\{f_1,\ldots,f_n\}$ of continuous selfmaps of a given metric space $X$ is called an iterated function system (shortly IFS). In a case of contractive selfmaps of a complete metric space is well-known that IFS has an unique attractor \cite{Hu}. However, in \cite{LS} authors studied highly non-contractive IFSs, i.e. such families $\mathcal{F}=\{f_1,\ldots,f_n\}$ of continuous selfmaps that for any remetrization of $X$ each function $f_i$ has Lipschitz constant $>1, i=1,\ldots,n.$ They asked when one can remetrize $X$ that $\mathcal{F}$ is Lipschitz IFS, i.e. all $f_i's$ are Lipschitz (not necessarily contractive), $ i=1,\ldots,n$. We give a general positive answer for this problem by constructing respective new metric (equivalent to the original one) on $X$, determined by a given family $\mathcal{F}=\{f_1,\ldots,f_n\}$ of continuous selfmaps of $X$. However, our construction is valid even for some specific infinite families of continuous functions.
	\end{abstract}
	\maketitle
	%
	
	\section{Introduction}
	We will start with brief discussion on well-known notions of metric spaces and IFS theories. Suppose that $(X,d)$ is a metric space.  For any selfmap $f \colon X \to X$ we define the Lipschitz constant $L_d(f)$ of $f$ as follows: 
	$$L_d(f)=\sup\left\{\frac{d(f(x),f(y))}{d(x,y)} \colon x,y \in X, \ x \neq y\right\}.$$ If $L_d(f)<\infty$ we say that $f$ is Lipschitz. In the case $L_d(f)<1$ we say that $f$ is contraction. 
	Denote the family of all compact subsets of $X$ by $K(X).$ The Hausdorff metric $d_H$ on $K(X) \setminus \{\emptyset\}$ is given by
	$$d_H(A,B)=\max\{\sup\{\inf\{d(a,x) \colon x \in B\} \colon a \in A\},\sup\{\inf\{d(y,b) \colon y \in A\} \colon b \in B\}\}$$ for any nonempty $A,B \in K(X).$ 
	
	Any family $\mathcal{F}=\{f_1,\ldots,f_n\}$ consisting of continuous selfmaps of $X$ is called an iterated function system, shortly IFS. The Hutchinson operator $F$ determined by an IFS $\mathcal{F}=\{f_1,\ldots,f_n\}$ is defined as a function $F \colon K(X) \to K(X)$ given by $F[A]=\bigcup_{i=1}^n f_i[A]$ for any $A \in K(X).$ 
	We say that $A \in K(X)$ is an attractor of $\mathcal{F}$ if $F(A)=A$ and $F(K) \to A$ with respect to Hausdorff metric $d_H.$
	Hutchinson proved in \cite{Hu} that in the case of complete metric space $X$ and IFS $\mathcal{F}$ consisting of contractive selfmaps
	there is a unique attractor of $\mathcal{F}.$
	Above result provokes to investigate the $L-$expansive \cite{LS}  IFSs, i.e. IFSs $\mathcal{F}$ satisfying $L_{\rho}(f)>1$ for any $f \in \mathcal{F}$ and a metric $\rho$ equivalent to original metric $d$. In \cite{LS} authors constructed $L-$expansive IFSs with attractors.  These examples are based on the non-Lipschitz square root function $ [0,1] \ni x \mapsto \sqrt{x}$ somehow. The following problem arises 
	\begin{question} \cite[Question 2.]{LS}
		Let $\mathcal{F}$ be an IFS on a metric space $(X,d).$ When exists a metric $\rho$ equivalent to $d$ such that all $f \in \mathcal{F}$ are Lipschitz?
	\end{question}
	
	A partial asnwer was given in the same paper.
	
	\begin{theorem}\cite[Theorem 23.]{LS}
	Consider the unit interval $[0,1]$ with the euclidean metric. Let $f \colon [0,1] \to [0,1]$ be a strictly increasing homeomorphic map with $f(0)=0$. Suppose that there is $a \in (0,1)$ such that $f(x)>x$ for all $x \in (0,a]$ and $f$ is continuously differentiable on $[f^{-1}(a),1].$ Then there is an equivalent metric on $[0,1]$ such that $f$ is Lipschitz.
	\end{theorem}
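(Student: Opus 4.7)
The plan is to exhibit a homeomorphism $g \colon [0,1] \to [0,1]$ and take the equivalent metric to be $\rho(x,y) := |g(x) - g(y)|$. Then $f$ is $\rho$-Lipschitz if and only if $\tilde{f} := g \circ f \circ g^{-1}$ is Lipschitz in the euclidean metric, so the task reduces to conjugating $f$ to a map with bounded difference quotients. The idea is to choose $g$ so that $\tilde{f}$ becomes the doubling map $y \mapsto 2y$ on a neighbourhood of $0$; this yields a Lipschitz map there and, crucially, absorbs the possibly non-differentiable behaviour of $f$ at $0$.

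To build $g$, first form the backward orbit of $a$ by setting $x_0 := a$ and $x_{n+1} := f^{-1}(x_n)$. Since $f(y) > y$ on $(0,a]$ and the only fixed point of $f$ in $[0,a]$ is $0$, the sequence $(x_n)$ decreases strictly to $0$. The intervals $I_n := [x_{n+1}, x_n]$ then partition $(0,a]$, and $f$ maps $I_{n+1}$ homeomorphically onto $I_n$. Now define $g$ by putting $g(0) := 0$, letting $g$ be the increasing affine bijection of $I_0 = [f^{-1}(a),a]$ onto $[1/4,1/2]$, letting $g$ be the increasing affine bijection of $[a,1]$ onto $[1/2,1]$, and extending recursively onto each $I_{n+1}$ via $g(x) := \tfrac{1}{2}\, g(f(x))$. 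The recursion is well-posed because $f(x) \in I_n$, where $g$ is already defined. A short induction gives $g(x_n) = 2^{-n-1}$, so the values agree at every junction; $g$ is strictly increasing on each piece (since $f$ is), hence strictly increasing globally; and $g(x) \to 0$ as $x \to 0^+$ because $g(I_n) \subseteq [0, 2^{-n-1}]$. Thus $g$ is a homeomorphism of $[0,1]$.

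To finish, I split the Lipschitz check for $\tilde{f}$ into three pieces. By construction $g \circ f = 2g$ on $\bigcup_{n \geq 1} I_n = (0, x_1]$, so $\tilde{f}(y) = 2y$ for $y \in (0, 1/4]$, which is $2$-Lipschitz. On $[1/4, 1/2]$ and on $[1/2, 1]$, the map $\tilde{f}$ is the composition of an affine $g^{-1}$, the restriction of $f$ to a compact subinterval of $[f^{-1}(a),1]$, and an affine $g$; the hypothesis that $f$ is $C^1$ on $[f^{-1}(a),1]$ makes $f$ Lipschitz there, and the composition is therefore Lipschitz on each closed piece. A standard gluing argument using the triangle inequality at the junctions $1/4$ and $1/2$ produces a Lipschitz constant on $(0,1]$ equal to the maximum of the three, and continuity of $\tilde{f}$ at $0$ with $\tilde{f}(0) = 0$ extends this to $[0,1]$. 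I expect the main technical obstacle to be verifying carefully that the recursively defined $g$ assembles into a genuine homeomorphism of $[0,1]$ despite the countably many pieces accumulating at $0$; the $C^1$ hypothesis is used only on the two outermost pieces, where the orbit recursion does not reach.
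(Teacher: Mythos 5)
Your proof is correct, but it takes a genuinely different route from the paper. The paper does not prove this statement directly: it quotes it from \cite{LS} and then subsumes it in Theorem \ref{pct} (specialized as Theorem \ref{oft}), whose proof remetrizes by the series $d_{f,\varepsilon}(x,y)=d(x,y)+\sum_{n\ge 1}(1+\varepsilon)^{-n}d(f^n(x),f^n(y))$ and verifies the $(1+\varepsilon)$-Lipschitz bound by an index shift in the sum. You instead conjugate: you build the backward orbit $x_{n+1}=f^{-1}(x_n)$ of $a$, which decreases to the fixed point $0$ because $f(x)>x$ on $(0,a]$, and define an order-preserving homeomorphism $g$ piecewise so that $g\circ f=2g$ on $(0,x_1]$; the metric $|g(x)-g(y)|$ then turns $f$ into the doubling map near $0$ and into an affine-conjugate of a $C^1$ (hence Lipschitz) map on the two outer pieces, and the finite gluing at $1/4$ and $1/2$ closes the argument. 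All the steps check out: the junction values $g(x_n)=2^{-n-1}$ match, $g$ is a strictly increasing surjection of $[0,1]$ and hence a homeomorphism, and $f([f^{-1}(a),a])=[a,f(a)]\subseteq[a,1]$ and $f([a,1])=[f(a),1]\subseteq[a,1]$ land where $g$ is affine. The trade-off is clear: your linearization argument is more explicit and more geometric, exhibiting a coordinate in which $f$ is literally $y\mapsto 2y$ near the fixed point, but it is tied to strictly monotone interval maps with the one-sided repelling behaviour at $0$ and does not obviously extend to families of maps or to general metric spaces; the paper's series metric is less explicit and gives no dynamical picture, but it applies to arbitrary continuous selfmaps of arbitrary metric spaces (indeed to any family with pointwise equicontinuous iterates) and achieves Lipschitz constant $1+\varepsilon$ for every $\varepsilon>0$, whereas your construction yields the larger constant $\max\{2,\,L_1,\,L_2\}$ coming from the doubling factor and the two outer pieces.
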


	\begin{remark}
		Note than we can not expect a general method of making any continuous selfmap a contractive one. For instance the Banach fixed point theorem forces an existence of a fixed point in the case of a contractive self map of a complete metric space $X$.
	\end{remark}

	\section{Equicontinuity}
	Now, let us introduce specific notations and commonly known results on equicontinuity.
	Suppose $(X,d)$ is a metric space and fix a family $\mathcal{F} \subset C(X)=\{f \colon X \to X \colon f \textrm{ is continuous}\}.$ 
	We say that  $\mathcal{F}$ is equicontinuous at a point $x \in X$ if for any $\varepsilon>0$ there is a $\delta>0$ such that 
	$$\forall_{y \in X} \ d(x,y)<\delta \Rightarrow \forall_{f \in \mathcal{F}} \  d(f(x),f(y))<\varepsilon.$$ If $\mathcal{F}$ is equicontinuous at all points $x \in X$, then we say that $\mathcal{F}$ is pointwise equicontinuous.
	We say that $\mathcal{F}$ is uniformly equicontinuous if for any $\varepsilon>0$ there is a $\delta>0$ such that 
	$$\forall_{x,y \in X} \ d(x,y)<\delta \Rightarrow \forall_{f \in \mathcal{F}} \ d(f(x),f(y))<\varepsilon.$$
	Obviously, pointwise equicontinuity follows from uniform equicontinuity.
	
	If $\mathcal{F},\mathcal{G} \subset C(X)$ we denote $\mathcal{F} \circ \mathcal{G}=\{f \circ g \colon f \in \mathcal{F}, \ g \in \mathcal{G}\}$ and $\mathcal{F}^n=\{f_1 \circ \ldots \circ f_n \colon f_1,\ldots,f_n \in \mathcal{F}\}.$ 
	We are interested in the following property 
	$$\forall_{n \in \N} \ \mathcal{F}^n \textrm{ is pointwise equicontinuous.} $$
	The following results should be known.
	\begin{lemma}
		Let $\mathcal{F},\mathcal{G} \subset C(X)$ be uniformly equicontinuous. Then $\mathcal{F} \circ \mathcal{G}$ is uniformly equicontinuous, too. 
	\end{lemma}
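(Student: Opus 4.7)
The plan is a direct chase through the two $\varepsilon$--$\delta$ conditions, in the order one reads the composition $f \circ g$ from the inside out. Fix $\varepsilon > 0$. First I would apply uniform equicontinuity of $\mathcal{F}$ to produce an intermediate tolerance $\eta > 0$ such that $d(u,v) < \eta$ implies $d(f(u), f(v)) < \varepsilon$ for every $f \in \mathcal{F}$. Next, apply uniform equicontinuity of $\mathcal{G}$ to this $\eta$: there is $\delta > 0$ such that $d(x,y) < \delta$ implies $d(g(x), g(y)) < \eta$ for every $g \in \mathcal{G}$.

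Now, for any $f \in \mathcal{F}$, $g \in \mathcal{G}$, and any $x, y \in X$ with $d(x,y) < \delta$, setting $u = g(x)$ and $v = g(y)$ gives $d(u,v) < \eta$ by the choice of $\delta$, and hence $d(f(g(x)), f(g(y))) = d(f(u), f(v)) < \varepsilon$ by the choice of $\eta$. Since this $\delta$ works uniformly over all pairs $(f,g)$ and all $x, y$, the family $\mathcal{F} \circ \mathcal{G}$ is uniformly equicontinuous.

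There is essentially no obstacle here; the argument is the standard composition trick for uniform continuity, and the only subtle point is making sure the quantifier order is correct, namely that $\eta$ is chosen independently of $f$ and $\delta$ is chosen independently of $g$, which is exactly what uniform (as opposed to merely pointwise) equicontinuity supplies.
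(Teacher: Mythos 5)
Your proof is correct: the paper states this lemma without proof (remarking only that the result ``should be known''), and your argument is precisely the standard inside-out composition of the two uniform equicontinuity conditions that the author evidently has in mind. The quantifier bookkeeping you highlight --- $\eta$ independent of $f$, then $\delta$ independent of $g$ --- is indeed the only point of substance, and you handle it correctly.
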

	\begin{corollary} \label{upot}
		Let $\mathcal{F} \subset C(X)$ be uniformly continuous. Then for any $n \in \N$ the family $\mathcal{F}^n$ is uniformly continuous.
	\end{corollary}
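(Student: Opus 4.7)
The plan is a routine induction on $n \in \N$, using the preceding lemma as the inductive engine. (I read the statement as \emph{uniformly equicontinuous}, matching the lemma that precedes it.)

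For the base case $n=1$, the family $\mathcal{F}^1 = \mathcal{F}$ is uniformly equicontinuous by hypothesis. For the inductive step, suppose $\mathcal{F}^n$ is uniformly equicontinuous. I would first verify the set-theoretic identity
$$\mathcal{F}^{n+1} = \mathcal{F}^n \circ \mathcal{F},$$
which follows immediately from the definition $\mathcal{F}^k = \{f_1 \circ \cdots \circ f_k : f_i \in \mathcal{F}\}$ by splitting off the last factor: any $f_1 \circ \cdots \circ f_{n+1}$ equals $(f_1 \circ \cdots \circ f_n) \circ f_{n+1}$, and conversely any such composition lies in $\mathcal{F}^{n+1}$. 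Then I would invoke the previous lemma on the pair $(\mathcal{F}^n, \mathcal{F})$ of uniformly equicontinuous families to conclude that $\mathcal{F}^{n+1}$ is uniformly equicontinuous.

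There is essentially no obstacle here; the entire content of the corollary is the inductive bootstrapping of the binary lemma to arbitrary finite compositions. The only point that requires a moment's thought is the algebraic identity for $\mathcal{F}^{n+1}$, and that is transparent from the definition. If one instead reads the statement literally (each $f \in \mathcal{F}$ uniformly continuous, rather than the family being uniformly equicontinuous), the proof is the same induction, replacing the preceding lemma by the elementary fact that a composition of two uniformly continuous maps is uniformly continuous.
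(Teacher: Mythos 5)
Your induction is correct and is exactly the argument the paper intends: the corollary is stated as an immediate consequence of the preceding lemma (the paper gives no written proof), obtained by iterating that lemma via $\mathcal{F}^{n+1}=\mathcal{F}^n\circ\mathcal{F}$. Your reading of ``uniformly continuous'' as ``uniformly equicontinuous'' is also the right interpretation of the paper's (slightly inconsistent) terminology.
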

	\begin{lemma} \label{psum}
		Let $\mathcal{F},\mathcal{G} \subset C(X)$ be pointwise equicontinuous. Then $\mathcal{F} \cup \mathcal{G}$ is pointwise equicontinuous, too.
	\end{lemma}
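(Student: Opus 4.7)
The statement is essentially a routine verification from the definition of pointwise equicontinuity, and my plan is to unpack the definitions at a fixed point and combine the two witnesses.

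Fix an arbitrary $x \in X$ and $\varepsilon > 0$. Since $\mathcal{F}$ is pointwise equicontinuous at $x$, I obtain some $\delta_1 > 0$ such that $d(x,y) < \delta_1$ implies $d(f(x), f(y)) < \varepsilon$ for every $f \in \mathcal{F}$. Analogously, since $\mathcal{G}$ is pointwise equicontinuous at $x$, I obtain some $\delta_2 > 0$ such that $d(x,y) < \delta_2$ implies $d(g(x), g(y)) < \varepsilon$ for every $g \in \mathcal{G}$.

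Setting $\delta \defeq \min\{\delta_1, \delta_2\} > 0$, I note that any $y \in X$ with $d(x,y) < \delta$ satisfies both $d(x,y) < \delta_1$ and $d(x,y) < \delta_2$ simultaneously, so for every $h \in \mathcal{F} \cup \mathcal{G}$ (whichever of the two families $h$ belongs to), the corresponding estimate yields $d(h(x), h(y)) < \varepsilon$. Since $x$ was arbitrary, this gives pointwise equicontinuity of $\mathcal{F} \cup \mathcal{G}$ on all of $X$.

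There is no real obstacle here; the only conceptual point is that taking the minimum of finitely many $\delta$'s preserves the implication, which is why the argument works for a union of two (and by induction, finitely many) pointwise equicontinuous families but would fail in general for infinite unions. No topological or metric subtleties enter, and the proof is essentially a one-liner once the definitions are written out.
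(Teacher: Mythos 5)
Your proof is correct and is exactly the routine minimum-of-two-deltas argument the paper has in mind; the paper omits the proof entirely (the lemma is listed among results that "should be known"), so there is nothing to diverge from. Your closing remark about why the argument extends to finite but not infinite unions is a sensible aside and consistent with how the lemma is used (only finitely many families $\mathcal{F}^1,\ldots,\mathcal{F}^N$ are ever combined).
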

	\begin{lemma} \label{ifsa}
		Any finite family $\mathcal{F} \subset C(X)$ is pointwise equicontinuous. Especially, for any $n \in \N,$ IFS $\mathcal{F}$ any family $\mathcal{F}^n$ is pointwise equicontinuous.
	\end{lemma}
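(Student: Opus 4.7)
The plan is to reduce everything to two trivial observations: a single continuous function is, by definition of continuity at a point, a pointwise equicontinuous one-element family; and the composition of finitely many continuous selfmaps of $X$ is again a continuous selfmap of $X$.

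First I would handle the first assertion by induction on the cardinality of $\mathcal{F}$. If $\mathcal{F}=\{f\}$, then pointwise equicontinuity of $\mathcal{F}$ at $x$ is literally the statement that $f$ is continuous at $x$, so the base case is immediate. For the inductive step, if $\mathcal{F}=\mathcal{F}'\cup\{f\}$ where $\mathcal{F}'$ is pointwise equicontinuous by the inductive hypothesis and $\{f\}$ is pointwise equicontinuous by the base case, then Lemma~\ref{psum} applied to $\mathcal{F}'$ and $\{f\}$ yields that $\mathcal{F}$ is pointwise equicontinuous. Alternatively, and even more directly, one can simply take $\delta\defeq\min\{\delta_1,\dots,\delta_n\}$ where $\delta_i>0$ is chosen for each $f_i\in\mathcal{F}$ so that $d(x,y)<\delta_i$ forces $d(f_i(x),f_i(y))<\varepsilon$.

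For the second assertion, the key observation is that $\mathcal{F}^n$ is still a finite family: if $\card\mathcal{F}=k$, then $\card\mathcal{F}^n\leq k^n$. Moreover, each element of $\mathcal{F}^n$ is a composition of $n$ continuous selfmaps of $X$, hence itself a continuous selfmap of $X$. Therefore $\mathcal{F}^n$ is again a finite family in $C(X)$, and the first part of the lemma applies to give pointwise equicontinuity of $\mathcal{F}^n$.

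I do not expect any real obstacle here; the only substantive input is the (well-known) Lemma~\ref{psum}, and even that can be bypassed by the explicit ``min of finitely many deltas'' argument. The statement is essentially a bookkeeping step recording that the hypothesis of the following development—namely pointwise equicontinuity of every iterate $\mathcal{F}^n$—is automatic in the finite IFS setting.
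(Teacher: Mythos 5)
Your argument is correct and is exactly the standard one the paper leaves implicit (the lemma is stated without proof as ``should be known''): take the minimum of finitely many $\delta$'s, or equivalently induct via Lemma~\ref{psum}, and observe that $\mathcal{F}^n$ is again a finite subfamily of $C(X)$ since compositions of continuous selfmaps are continuous and $\card\mathcal{F}^n\leq(\card\mathcal{F})^n$. No gaps.
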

	Using Corollary \ref{upot} one can deduce the following result.
	\begin{lemma} \label{piu}
		For any compact metric space $(X,d)$ and pointwise equicontinuous family $\mathcal{F} \subset C(X)$ all families $\mathcal{F}^n$ are uniformly continuous, $n \in \N.$
	\end{lemma}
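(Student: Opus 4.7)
The plan is to prove Lemma \ref{piu} in two steps: first upgrade pointwise equicontinuity of $\mathcal{F}$ itself to uniform equicontinuity by invoking compactness of $X$, and then apply Corollary \ref{upot} iteratively to pass from $\mathcal{F}$ to $\mathcal{F}^n$ for arbitrary $n \in \N$.

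For the first step I would fix $\varepsilon>0$ and, using pointwise equicontinuity of $\mathcal{F}$, associate to each $x\in X$ a radius $\delta_x>0$ such that $d(x,y)<2\delta_x$ implies $d(f(x),f(y))<\varepsilon/2$ for every $f\in\mathcal{F}$. The open balls $\{B(x,\delta_x)\}_{x\in X}$ form an open cover of the compact space $X$, so a finite subcover $B(x_1,\delta_{x_1}),\ldots,B(x_k,\delta_{x_k})$ exists; set $\delta\defeq\min_{i\le k}\delta_{x_i}$. Given any pair $x,y\in X$ with $d(x,y)<\delta$, choose $i$ with $x\in B(x_i,\delta_{x_i})$ and observe that then $d(x_i,y)<2\delta_{x_i}$, so by the triangle inequality $d(f(x),f(y))\le d(f(x),f(x_i))+d(f(x_i),f(y))<\varepsilon$ uniformly in $f\in\mathcal{F}$. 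This yields uniform equicontinuity of $\mathcal{F}$.

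For the second step, once $\mathcal{F}$ is uniformly equicontinuous Corollary \ref{upot} applies directly and delivers the uniform equicontinuity (and hence uniform continuity) of each $\mathcal{F}^n$, which is the conclusion of the lemma.

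The main obstacle is really the compactness-based upgrade in the first step; everything else is a clean application of results stated earlier in the paper. One subtlety to watch is that pointwise equicontinuity provides $\delta_x$ only as a function of $x$, so the finite subcover argument must be arranged so that the point $x_i$ playing the role of the "reference" point is the one whose $\delta_{x_i}$ is used — hence the doubling trick ($2\delta_{x_i}$ in the definition, $\delta\le\delta_{x_i}$ in the choice of $\delta$), which guarantees that both $x$ and $y$ lie within the prescribed neighborhood of $x_i$ simultaneously.
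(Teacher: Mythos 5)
Your proof is correct and follows the same route the paper intends: the standard Lebesgue-number/finite-subcover argument upgrades pointwise equicontinuity to uniform equicontinuity on the compact space, after which Corollary \ref{upot} finishes the job (the paper itself only remarks that the lemma follows from Corollary \ref{upot} and omits the details). The doubling trick you flag is exactly the right care point, and your handling of it is sound.
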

	
	\section{Main result}
	\begin{theorem} \label{pct}
		Suppose $(X,d)$ is a metric space and $\mathcal{F} \subset C(X)$ is a family such that $\mathcal{F}^n$ is pointwise equicontinuous for each $n \in \N$. Then for any $\varepsilon>0$ there is an equivalent metric $d_{\varepsilon}$ such that $L_{d_{\varepsilon}}(f) \leq 1+\varepsilon$ for any $f \in \mathcal{F}.$ 
	\end{theorem}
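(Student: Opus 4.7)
The plan is to define $d_\varepsilon$ by a weighted supremum over orbits. Set $\bar d(x,y) \defeq \min(d(x,y),1)$ — a bounded metric topologically equivalent to $d$ — and, with the convention $\mathcal{F}^0 \defeq \{\id\}$, put
\[
d_\varepsilon(x,y) \defeq \sup_{n \geq 0}\ \sup_{g \in \mathcal{F}^n} \frac{\bar d(g(x),g(y))}{(1+\varepsilon)^n}.
\]
Every term lies in $[0,1]$, so $d_\varepsilon$ is finite and bounded by $1$; symmetry and the triangle inequality transfer termwise from $\bar d$ and are preserved by the supremum, and the $n=0$ summand $\bar d(x,y)$ is a lower bound for $d_\varepsilon(x,y)$, forcing $d_\varepsilon(x,y)=0 \Rightarrow x=y$.

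The Lipschitz bound for $f \in \mathcal{F}$ uses the inclusion $\{g\circ f : g \in \mathcal{F}^n\} \subseteq \mathcal{F}^{n+1}$. Reindexing $m = n+1$ one gets
\[
d_\varepsilon(f(x),f(y)) \leq \sup_{m \geq 1}\sup_{h \in \mathcal{F}^m} \frac{\bar d(h(x),h(y))}{(1+\varepsilon)^{m-1}} = (1+\varepsilon)\sup_{m \geq 1}\sup_{h \in \mathcal{F}^m} \frac{\bar d(h(x),h(y))}{(1+\varepsilon)^m} \leq (1+\varepsilon)\,d_\varepsilon(x,y),
\]
hence $L_{d_\varepsilon}(f) \leq 1+\varepsilon$.

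The remaining and main step is the topological equivalence $d_\varepsilon \sim d$. One direction is cheap: $d_\varepsilon \geq \bar d$ from the $n=0$ term, so $d_\varepsilon$-convergence implies $\bar d$-convergence and hence $d$-convergence. For the converse, fix $x \in X$ and $\eta > 0$. Using $\bar d \leq 1$, choose $N$ with $(1+\varepsilon)^{-N} \leq \eta/2$; then every summand with $n \geq N$ is at most $\eta/2$ uniformly in $y$. For each of the finitely many indices $n < N$, the assumed pointwise equicontinuity of $\mathcal{F}^n$ at $x$ furnishes $\delta_n > 0$ such that $d(x,y) < \delta_n$ forces $\sup_{g \in \mathcal{F}^n} d(g(x),g(y)) \leq \eta/2$, dominating the corresponding term. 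Taking $\delta \defeq \min_{n<N}\delta_n$ yields $d_\varepsilon(x,y) \leq \eta/2$ whenever $d(x,y) < \delta$. The subtle point is that the hypothesis supplies pointwise equicontinuity separately for each $n$ with no uniform-in-$n$ modulus; the geometric weight $(1+\varepsilon)^{-n}$ together with the truncation to $\bar d$ are precisely what allow the supremum to be split into a uniformly small tail plus a finite head, the latter handled by a minimum over finitely many $\delta_n$'s.
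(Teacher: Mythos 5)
Your argument is correct, and it runs on the same engine as the paper's proof: discount the images under $\mathcal{F}^n$ by the geometric weight $(1+\varepsilon)^{-n}$, force the base metric to be bounded by $1$ so the tail $n\ge N$ is uniformly small, handle the finite head with the assumed pointwise equicontinuity of each $\mathcal{F}^n$ separately, and derive the Lipschitz bound from the inclusion $\{g\circ f\colon g\in\mathcal{F}^n\}\subseteq\mathcal{F}^{n+1}$ after reindexing. The one genuine difference is the aggregator: the paper takes the weighted \emph{sum} $\rho(x,y)=d(x,y)+\sum_{n\ge 1}(1+\varepsilon)^{-n}\sup_{f\in\mathcal{F}^n}d(f(x),f(y))$, whereas you take the \emph{supremum} over $n\ge 0$, with the convention $\mathcal{F}^0=\{\id\}$ absorbing the leading term that the paper must handle separately in its Lipschitz estimate (via the identity $d(g(x),g(y))=(1+\varepsilon)\cdot d(g(x),g(y))/(1+\varepsilon)$ and $g\in\mathcal{F}^1$). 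Your version buys slightly cleaner bookkeeping: finiteness is immediate, the tail estimate is a single bound $(1+\varepsilon)^{-N}\le\eta/2$ rather than a geometric tail sum, and the Lipschitz step is one reindexing of the supremum rather than a term-by-term inequality followed by summation. The paper's summed version has the mild advantages that $\rho\ge d$ holds on the nose and that the formula specializes verbatim to the single-map metric $d_{f,\varepsilon}$ of Theorem \ref{oft}; both versions preserve completeness of $d$ by the same lower-bound argument. All the steps you give check out, including the point you rightly flag: no uniform-in-$n$ modulus of equicontinuity is available, and the head/tail split is exactly what makes that irrelevant.
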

	\begin{proof}
		We may suppose that $d \leq 1$ (if $d>1$ put $d:=\frac{d}{d+1}$.) Define $$\rho(x,y)=d_{\varepsilon}(x,y)=d(x,y)+\sum_{n=1}^{\infty} \frac{\sup\{d(f(x),f(y)) \colon f \in \mathcal{F}^n\}}{(1+\varepsilon)^n}.$$
		It is easy to see that $\rho$ is metric and $\rho \geq d.$ 
		
		In order to proof that $\rho$ and $d$ are equivalent we fix a sequence $(x_k)_{k \in \N}$ with $x_k \to x$ as $k \to \infty$ in $(X,d)$ for some $x \in X.$ We will check that $\rho(x_k,x) \to 0$ as $k \to \infty.$
		 
		 Fix $\eta>0$ and find $k_1 \in \N$ with $d(x_k,x)<\frac{\eta}{3}$ for any $k \geq k_1.$
		 We find $N \in \N$ large enough to ensure $\sum_{n=N+1}^{\infty} \frac{1}{(1+\varepsilon)^n}<\frac{\eta}{3}.$ By our assumption, families $\mathcal{F}^n, \ n \leq N$ are pointwise equicontinuous and consequently by Lemma \ref{psum}, the union $\bigcup_{n=1}^{N} \mathcal{F}^N$ is pointwise equicontinuous, too. Then, we may choose $\delta>0$ such that
		 $$\forall_{y \in X} \ d(x,y)<\delta \Rightarrow \forall_{f \in \mathcal{F}^n, \ n \leq N} \  d(f(x),f(y))<\frac{\eta \cdot \varepsilon}{3}.$$
		 Find $k_2 \geq k_1$ such that $d(x_k,x)<\delta$ for any $k \geq k_2.$ As a result, for any $k \geq k_2$ we have
		 $$\rho(x_k,x)=d(x_k,x)+\sum_{n=1}^{N} \frac{\sup\{d(f(x_k),f(x)) \colon f \in \mathcal{F}^n\}}{(1+\varepsilon)^n}+\sum_{n=N+1}^{\infty} \frac{\sup\{d(f(x_k),f(x)) \colon f \in \mathcal{F}^n\}}{(1+\varepsilon)^n}$$
		 $$<\frac{\eta}{3}+\sum_{n=1}^{N} \frac{\frac{\eta \cdot \varepsilon}{3}}{(1+\varepsilon)^n}+\frac{\eta}{3}<\frac{2\eta}{3}+\frac{\eta \varepsilon }{3} \sum_{n=1}^{\infty} \frac{1}{(1+\varepsilon)^n}=\eta.$$
		 
		 Now, we will check that any $g \in \mathcal{F}$ is $(1+\varepsilon)-$Lipschitz as $g \colon (X,\rho) \to (X,\rho).$
		 
		 Fix $x,y \in X, \ g \in \mathcal{F}$. Firstly, note that \begin{equation} \label{ps} d(g(x),g(y))=(1+\varepsilon) \cdot \frac{d(g(x),g(y))}{1+\varepsilon}. \end{equation}
		 Secondly, for any $n \in \N, \ h \in \mathcal{F}^n$ we have $h \circ g \in \mathcal{F}^{n+1}$ and consequently
		 $$\frac{d(h(g(x)),h(g(y)))}{(1+\varepsilon)^{n}} \leq (1+\varepsilon) \cdot \frac{\sup\{d(f(x),f(y)) \colon f \in \mathcal{F}^{n+1}\}}{(1+\varepsilon)^{n+1}}.$$ Taking supremum over $h \in \mathcal{F}^n$ we get
		 $$\frac{\sup\{d(h(g(x)),h(g(y))) \colon h \in \mathcal{F}^n\}}{(1+\varepsilon)^{n}} \leq (1+\varepsilon) \cdot \frac{\sup\{d(f(x),f(y)) \colon f \in \mathcal{F}^{n+1}\}}{(1+\varepsilon)^{n+1}}.$$ Summing up these inequalities over $n \in \N$ we have
		 \begin{align}\label{rs}
		 &\sum_{n=1}^{\infty} 
		 \frac{\sup\{d(f(g(x)),f(g(y))) \colon f \in \mathcal{F}^n\}}{(1+\varepsilon)^{n}} \leq (1+\varepsilon) \cdot \sum_{n=1}^{\infty} \frac{\sup\{d(f(x),f(y)) \colon f \in \mathcal{F}^{n+1}\}}{(1+\varepsilon)^{n+1}}		
		 \\
		 \label{rcd}
		 &=(1+\varepsilon) \cdot \sum_{n=2}^{\infty} \frac{\sup\{d(f(x),f(y)) \colon f \in \mathcal{F}^{n}\}}{(1+\varepsilon)^{n}}.
		 \end{align}
		 Using inequalities (\ref{ps}),(\ref{rs})  (\ref{rcd}) we have
		 $$\rho(g(x),g(y)) \leq (1+\varepsilon) \cdot \frac{d(g(x),g(y))}{1+\varepsilon}+(1+\varepsilon) \cdot \sum_{n=2}^{\infty} \frac{\sup\{d(f(x),f(y)) \colon f \in \mathcal{F}^{n}\}}{(1+\varepsilon)^{n}}
		 $$
		 $$\leq (1+\varepsilon) \cdot \sum_{n=1}^{\infty} \frac{\sup\{d(f(x),f(y)) \colon f \in \mathcal{F}^{n}\}}{(1+\varepsilon)^{n}} \leq (1+\varepsilon) \cdot \rho(x,y).$$
	\end{proof}
	
	\section{Further remarks and conclusions}
	As a particular application of Theorem \ref{pct} and its proof we get the result for one continuous function, which extends \cite[Theorem 23.]{LS}.
	
	\begin{theorem} \label{oft}
		Let $(X,d)$ be a metric space and $f \colon (X,d) \to (X,d)$ be a continuous map. For any $\varepsilon>0$ there is an equivalent metric $d_{f,\epsilon}$ on $X$  given by . 
		$$d_{f,\varepsilon}(x,y)=d(x,y)+\sum_{n=1}^{\infty}\frac{d(f^n(x),f^n(y))}{(1+\varepsilon)^n}$$ 	with the notation: $f^n$ is $n-$th autocomposition of $f$ (instead of $n-$th power).
		Then $f$ is $(1+\varepsilon)-$Lipschitz as a map $f \colon (X,d_{f,\varepsilon}) \to  (X,d_{f,\varepsilon}).$
	\end{theorem}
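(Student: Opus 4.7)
The plan is to derive Theorem \ref{oft} as a direct specialization of Theorem \ref{pct} applied to the singleton family $\mathcal{F}=\{f\}$. First I would verify the hypothesis: since $\mathcal{F}^n = \{f^n\}$ is finite (in fact a singleton) and consists of continuous maps, Lemma \ref{ifsa} gives that $\mathcal{F}^n$ is pointwise equicontinuous for every $n \in \N$. Thus Theorem \ref{pct} applies and produces, for each $\varepsilon > 0$, an equivalent metric $d_\varepsilon$ on $X$ with $L_{d_\varepsilon}(f) \le 1+\varepsilon$.

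Next I would observe that for a singleton family the supremum in the definition of $\rho$ from the proof of Theorem \ref{pct} collapses: $\sup\{d(g(x),g(y)) : g \in \mathcal{F}^n\} = d(f^n(x), f^n(y))$. Hence the metric produced by Theorem \ref{pct} is literally the formula $d_{f,\varepsilon}(x,y) = d(x,y) + \sum_{n=1}^{\infty} \frac{d(f^n(x),f^n(y))}{(1+\varepsilon)^n}$ from the statement. Both the equivalence of $d_{f,\varepsilon}$ and $d$ and the $(1+\varepsilon)$-Lipschitz estimate for $f$ are then immediate transcriptions of the estimates in the proof of Theorem \ref{pct}, with the only simplification that the chain $h \circ f \in \mathcal{F}^{n+1}$ used there becomes $f^n \circ f = f^{n+1}$, so the telescoping/reindexing argument $n \mapsto n+1$ remains identical.

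The only technical point I would flag is convergence of the series: the proof of Theorem \ref{pct} begins by replacing $d$ with the equivalent bounded metric $d/(1+d)$ when $d$ is unbounded, which is precisely what is needed for the tail $\sum d(f^n(x),f^n(y))/(1+\varepsilon)^n$ to be finite a priori. So in the statement of Theorem \ref{oft} the formula $d_{f,\varepsilon}$ is to be understood either under the assumption that $d$ is bounded or after the same preliminary reduction; this reduction does not affect topological equivalence. I do not expect any real obstacle beyond this bookkeeping, since the main content is already carried by Theorem \ref{pct}.
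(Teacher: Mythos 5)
Your proposal is correct and matches the paper's approach exactly: the paper gives no separate proof of Theorem \ref{oft}, presenting it precisely as the specialization of Theorem \ref{pct} to the singleton family $\mathcal{F}=\{f\}$, where the supremum collapses to $d(f^n(x),f^n(y))$. Your remark that the displayed formula implicitly requires the preliminary reduction to a bounded metric is a fair and worthwhile clarification that the paper leaves tacit.
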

	
	Furthermore, by Lemma \ref{ifsa} any IFS satisfies assumptions of Theorem \ref{pct}. Then, by Theorem \ref{pct} we answer to the \cite[Question 2.]{LS}.
	\begin{theorem}
		Suppose $(X,d)$ is a metric space and $\mathcal{F} \subset C(X)$ is an IFS. Then for any $\varepsilon>0$ there is an equivalent metric $d_{\varepsilon}$ such that any $\mathcal{F} \ni f \colon (X,d_{\varepsilon}) \to (X,d_{\varepsilon})$ is $(1+\varepsilon)-$Lipschitz.
	\end{theorem}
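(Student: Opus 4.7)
The plan is to reduce the statement to a direct application of Theorem \ref{pct}, using Lemma \ref{ifsa} to verify the hypothesis. First I would recall that by definition an IFS $\mathcal{F} = \{f_1, \ldots, f_n\}$ is a finite family of continuous selfmaps of $X$. Finite families of continuous maps are automatically pointwise equicontinuous at every point (the $\delta$ at a point $x$ is obtained by taking the minimum of finitely many $\delta_i$'s coming from continuity of each $f_i$ at $x$).

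Next I would invoke Lemma \ref{ifsa}, which asserts precisely that for every $n \in \N$, the family $\mathcal{F}^n$ of all $n$-fold compositions drawn from an IFS $\mathcal{F}$ is pointwise equicontinuous. Since $\mathcal{F}^n$ is again a finite family (its cardinality is at most $n^{|\mathcal{F}|}$, in fact $|\mathcal{F}|^n$), there is nothing to verify beyond the finiteness noted above; Lemma \ref{ifsa} packages this cleanly.

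With the hypothesis of Theorem \ref{pct} thus satisfied, I would conclude by applying that theorem to $\mathcal{F}$ and the given $\varepsilon > 0$. This produces an equivalent metric $d_\varepsilon$ on $X$ such that $L_{d_\varepsilon}(f) \le 1+\varepsilon$ for every $f \in \mathcal{F}$, which is exactly the statement to be proved.

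There is essentially no obstacle here beyond bookkeeping: all the analytic content lies in the construction $\rho(x,y) = d(x,y) + \sum_{n=1}^\infty (1+\varepsilon)^{-n} \sup_{f \in \mathcal{F}^n} d(f(x),f(y))$ carried out inside Theorem \ref{pct}, together with the pointwise equicontinuity of iterates of a finite family proved in Lemma \ref{ifsa}. The only thing worth flagging is the normalization step at the start of the proof of Theorem \ref{pct} (replacing $d$ by $d/(d+1)$ if necessary to ensure $d \le 1$), since this is what guarantees that the infinite sum defining $\rho$ converges; the resulting metric $d_\varepsilon$ is equivalent to the original $d$ rather than identical to it, which is exactly what the statement allows.
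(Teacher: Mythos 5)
Your proposal is correct and follows exactly the paper's own route: the paper likewise derives this theorem by noting that Lemma \ref{ifsa} guarantees each $\mathcal{F}^n$ is pointwise equicontinuous (being a finite family of continuous maps) and then applying Theorem \ref{pct}. The only blemish is the throwaway bound ``at most $n^{|\mathcal{F}|}$'' on $|\mathcal{F}^n|$, which is not correct in general, but your corrected count of at most $|\mathcal{F}|^n$ is all that is needed and the argument stands.
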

	
	One can use Lemma \ref{piu} to get another observation.
	
	\begin{corollary}
		Suppose that $\mathcal{F} \subset C(X)$ is a pointwise equicontinuous family of functions defined on a compact metric space $(X,d).$ Then for any $\varepsilon>0$ there is an equivalent metric $d_{\varepsilon}$ such that any $\mathcal{F} \ni f \colon (X,d_{\varepsilon}) \to (X,d_{\varepsilon})$ is $(1+\varepsilon)-$Lipschitz.
	\end{corollary}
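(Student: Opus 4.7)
The plan is to reduce the corollary directly to Theorem \ref{pct} by verifying its hypothesis, namely that $\mathcal{F}^n$ is pointwise equicontinuous for every $n \in \N$. The compactness of $X$ is what makes this reduction essentially free, via Lemma \ref{piu}.

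First I would invoke Lemma \ref{piu}: since $(X,d)$ is compact and $\mathcal{F}$ is pointwise equicontinuous, every family $\mathcal{F}^n$ is uniformly continuous (in the sense of uniform equicontinuity, as the lemma is phrased relative to Corollary \ref{upot}). Then I would observe the elementary fact, already noted just after the definitions in Section~2, that uniform equicontinuity implies pointwise equicontinuity. Consequently each $\mathcal{F}^n$ is pointwise equicontinuous.

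With this verified, I would apply Theorem \ref{pct} to the family $\mathcal{F}$ on $(X,d)$ with the given $\varepsilon>0$. This produces an equivalent metric $d_\varepsilon$ on $X$ such that every $f \in \mathcal{F}$ satisfies $L_{d_\varepsilon}(f) \leq 1+\varepsilon$, which is exactly what the corollary asserts.

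There is essentially no obstacle here; the statement is an immediate pairing of Lemma \ref{piu} (which handles the compact case) with Theorem \ref{pct} (which handles the remetrization step). The only conceptual point worth emphasizing is that compactness is used exactly once, to upgrade the standing pointwise equicontinuity of $\mathcal{F}$ to the iterated pointwise equicontinuity of all $\mathcal{F}^n$ required by Theorem \ref{pct}.
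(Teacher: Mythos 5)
Your proposal is correct and is exactly the argument the paper intends: use Lemma \ref{piu} to get uniform (hence pointwise) equicontinuity of every $\mathcal{F}^n$ from compactness, then apply Theorem \ref{pct}. Nothing further is needed.
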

	
	Since a substitution $d:=\frac{d}{d+1}$ preserves completeness of $d$ and metric $\rho$ constructed in Theorem \ref{pct} satisfies $\rho \geq d$ then $\rho$ preserves completeness, too. In other words:
	
	\begin{remark}
		If a metric space $(X,d)$ is complete, then a metric $\rho$ defined in Theorem \ref{pct} is complete.
	\end{remark}
	
	Now, recall a notion of generalized joint spectral radius (see \cite{JJ}). Suppose that $\mathcal{F} \subset C(X)$ for some metric space $(X,d).$ 
	The limit $r_d(\mathcal{F}):=\lim_{n \to \infty} (L_d(\mathcal{F}^n))^{\frac{1}{n}}$ is called generalized joint spectral radius of $\mathcal{F}.$
	Recall that metrics $d,\rho$ defined on a set $X$ are Lipschitz equivalent if both identities $id_{d,\rho} \colon (X,d) \to (X,\rho), \ id_{\rho,d} \colon (X,\rho) \to (X,d)$ are Lipschitz.
	In \cite{JJ} the following result is obtained:
	\begin{theorem}\cite[Corollary 3.4]{JJ}
		Let $(X,d)$ be a metric space. Suppose that $\mathcal{F} \subset C(X)$ satisfies $L_d(\mathcal{F})<\infty.$ Then $r_d(\mathcal{F})<1$ if and only if there is a metric $\rho$ Lipschitz equivalent $d$ with $L_{\rho}(\mathcal{F})<1.$ 
	\end{theorem}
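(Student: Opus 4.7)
The plan is to prove both implications, with the nontrivial direction adapting the construction from Theorem \ref{pct}.

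For the $(\Leftarrow)$ direction, suppose $\rho$ is Lipschitz equivalent to $d$, say $C_1 d \leq \rho \leq C_2 d$, and $L_\rho(\mathcal{F}) < 1$. First I would observe that for any $f \in C(X)$,
$$L_d(f) \leq \frac{C_2}{C_1}\, L_\rho(f),$$
and that $L$ is submultiplicative under composition, so $L_\rho(\mathcal{F}^n) \leq L_\rho(\mathcal{F})^n$. Combining these gives $L_d(\mathcal{F}^n) \leq (C_2/C_1)\, L_\rho(\mathcal{F})^n$, and taking $n$-th roots as $n \to \infty$ yields $r_d(\mathcal{F}) \leq L_\rho(\mathcal{F}) < 1$, since $(C_2/C_1)^{1/n} \to 1$.

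For $(\Rightarrow)$, assume $r_d(\mathcal{F}) < 1$ and pick $\lambda \in (r_d(\mathcal{F}), 1)$. Choosing $\lambda' \in (r_d(\mathcal{F}),\lambda)$, the definition of the joint spectral radius gives $L_d(\mathcal{F}^n) \leq (\lambda')^n$ for all sufficiently large $n$; together with the bound $L_d(\mathcal{F}^n) \leq L_d(\mathcal{F})^n < \infty$ for small $n$, this produces a summable weight sequence $\sum_{n \geq 1} L_d(\mathcal{F}^n)/\lambda^n < \infty$. In analogy with Theorem \ref{pct}, I would set
$$\rho(x,y) = d(x,y) + \sum_{n=1}^{\infty} \frac{\sup\{d(f(x),f(y)) : f \in \mathcal{F}^n\}}{\lambda^n}.$$
Each summand is a pseudo-metric, so $\rho$ satisfies the triangle inequality; positivity follows from the $d(x,y)$ term; and the partial sums are uniformly dominated by $\bigl(1+\sum_{n\ge 1} L_d(\mathcal{F}^n)/\lambda^n\bigr)\, d(x,y)$. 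Thus $\rho$ is a metric satisfying $d \leq \rho \leq Kd$ for some constant $K$, i.e.\ Lipschitz equivalent to $d$. The contraction estimate $\rho(g(x),g(y)) \leq \lambda\, \rho(x,y)$ for every $g \in \mathcal{F}$ is then obtained by the same index-shift argument used in the proof of Theorem \ref{pct}, with $1+\varepsilon$ replaced by $\lambda$; this yields $L_\rho(\mathcal{F}) \leq \lambda < 1$.

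The main obstacle is upgrading topological equivalence (which is what Theorem \ref{pct} produces) to Lipschitz equivalence. This is what forces the use of the strict inequality $r_d(\mathcal{F}) < \lambda$ in the denominator rather than $1+\varepsilon$, and makes essential use of the hypothesis $L_d(\mathcal{F}) < \infty$: jointly these guarantee summability of $L_d(\mathcal{F}^n)/\lambda^n$, which in turn gives the upper Lipschitz bound $\rho \leq K d$. Once this is in place, the verification of the metric axioms and of the contraction estimate is essentially a rerun of the calculation in Theorem \ref{pct} with constants renamed.
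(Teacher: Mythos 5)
This theorem is imported verbatim from \cite{JJ} (Corollary 3.4 there) and the paper supplies no proof of it, so there is no internal argument to compare against; judged on its own, your proof is correct. The $(\Leftarrow)$ direction is the routine submultiplicativity estimate, and in the $(\Rightarrow)$ direction you correctly isolate the two points where the hypotheses earn their keep: $L_d(\mathcal{F})<\infty$ together with $r_d(\mathcal{F})<\lambda'<\lambda$ makes $\sum_{n\ge 1} L_d(\mathcal{F}^n)/\lambda^n$ converge, which is precisely what upgrades the weighted-sum metric from a merely topologically equivalent one (as in Theorem \ref{pct}) to one satisfying $d\le\rho\le Kd$, after which the index-shift computation gives $L_\rho(\mathcal{F})\le\lambda<1$. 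This Bielecki-type remetrization with weights $\lambda^{-n}$ is exactly the mechanism of Theorem \ref{pct} and, as far as one can tell, the same construction underlying Jachymski's original proof.
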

	
	\begin{remark}
		A metric $\rho$ constructed in Theorem \ref{pct} for a given $\varepsilon>0$ satisfies: $r_{\rho}(\mathcal{F}) \leq 1+\varepsilon.$
	\end{remark}
	\begin{proof}
		Fix $n \in \N$ and $g \in \mathcal{F}^n$. Similarly as in proof of Theorem \ref{pct} we have
		$$d(g(x),g(y))=(1+\varepsilon)^n \cdot \frac{d(g(x),g(y))}{(1+\varepsilon)^n}$$ and
		$$\sum_{k=1}^{\infty} 
		\frac{\sup\{d(f(g(x)),f(g(y))) \colon f \in \mathcal{F}^k\}}{(1+\varepsilon)^{k}} \leq (1+\varepsilon)^n \cdot \sum_{k=n+1}^{\infty} \frac{\sup\{d(f(x),f(y)) \colon f \in \mathcal{F}^{k}\}}{(1+\varepsilon)^{k}}$$ (compare with inequalities (\ref{ps}),(\ref{rs}),(\ref{rcd}).) Consequently, $L_{\rho}(g) \leq (1+\varepsilon)^n, \ L_{\rho}(\mathcal{F}^n) \leq (1+\varepsilon)^n$ and $r_{\rho}(\mathcal{F}) \leq \lim_{n \to \infty} ((1+\varepsilon)^n)^{\frac{1}{n}}=1+\varepsilon.$
	\end{proof}
	The above result underlies relevance of distinction between two cases: $r_d(\mathcal{F})<1$ and $r_d(\mathcal{F}) \geq 1.$

	\end{document}